\def\0{{\bf 0}}
\def\1{{\bf 1}}
\def\d{{\bf d}}
\def\G{{\cal G}}
\def\A{{\cal A}}
\def\p{{\bf p}}
\def\R{{\mathbf{R}}}
\def\e{{\bf e}}
\long\def\jnt#1{{#1}}
\long\def\blue#1{{#1}}
\long\def\comm#1{}
\def\i[#1]#2{\langle #1,#2 \rangle_d}
\def\old#1{}
\def\aot#1{{#1}}
\def\ao#1{{#1}}
\def\jntn#1{{#1}}
\def\red#1{{#1}}
\newtheorem{theorem}{{\indent Theorem}}
\newtheorem{proposition}[theorem]{\indent {Proposition}}
\newtheorem{corollary}[theorem]{\indent {Corollary}}
\newtheorem{lemma}[theorem]{\indent {Lemma}}
\newtheorem{remark}[theorem]{\indent {Remark}}
\title{\LARGE \bf Degree Fluctuations and the Convergence Time of Consensus Algorithms\thanks{Research partially supported by the NSF under grant CMMI-0856063.}}
 \author{Alex Olshevsky\thanks{Alex Olshevsky is with the Department of Industrial and Enterprise Systems Engineering, University of Illinois at Urbana-Champaign, Email: \texttt{aolshev2@illinois.edu}} and  John N. Tsitsiklis \thanks{John N. Tsitsiklis is with the Laboratory for Information and
Decision Systems, Department of Electrical Engineering and Computer
Science, Massachusetts Institute of Technology. Email:
\texttt{jnt}\texttt{@mit.edu}.  }\thanks{A preliminary version of this paper appeared in the Proceedings of the 50th IEEE Conference on Decision and Control. The current
manuscript contains expanded proofs and new results relative to the conference version. } }
\begin{document}
\maketitle

\begin{abstract}
\noindent
We consider a consensus algorithm in which every node in a sequence of undirected, $B$-connected graphs assigns equal  weight to each  of its neighbors. Under the assumption that the degree of each node is fixed (except \jntn{for} times when \jntn{the node} has no connections to other nodes), we show  that consensus is achieved within a given accuracy $\epsilon $ \ao{on $n$ nodes} in time $\aot{B+}4n^3 B \blue{\ln}(2n/\epsilon)$. Because there is a direct  relation between consensus algorithms in time-varying environments and  \ao{in}homogeneous random walks, our result also translates into a general  statement on such random walks. \ao{Moreover,} we give a simple proof of a result of Cao, Spielman, and Morse that the \red{worst case} convergence time becomes \ao{exponentially large in the number of nodes $n$} under slight relaxation of  the  degree constancy assumption.
\end{abstract}

\section{Introduction}
\jnt{Consensus algorithms are a class of iterative update schemes that are commonly
used as building blocks for the design of distributed \ao{control laws. Their main advantage is robustness} in the presence of
time-varying environments and unexpected communication link failures.
Consensus algorithms have attracted significant interest in a variety of contexts such as distributed optimization \cite{TBA86}, \cite{NO09} \ao{coverage control \cite{GCB07}}, and many other contexts involving networks in which central control is absent and communication capabilities \blue{are} time-varying.}

\jnt{While the convergence properties of consensus algorithms in time-varying environments are well understood, \blue{much less is known about the} corresponding convergence times. An inspection of the classical convergence proofs \blue{(\cite{BT, JLM03})} leads to convergence time upper bounds that grow exponentially with the number of nodes. It is then natural to \blue{look for} conditions under which the convergence time only grows polynomially, and this is the subject of this \blue{paper.}}

\jnt{In our main result, we show that a consensus algorithm in which every node assigns equal weight to each of its neighbors \ao{in a sequence of
undirected graphs}  has polynomial convergence time if the degree of any given node is constant in time (except possibly during the times when \jntn{the node} has no connections to other nodes). 
}

\subsection{\jnt{Model, notation, and background}}  
\jnt{In this subsection, we define our notation, the model of interest, and some background on consensus algorithms.}
 
We will consider only undirected graphs in this paper; this will often be stated explicitly, but when unstated
every graph should be understood to be undirected by
default. Given \jnt{a} graph $G$, we will use $N_{i}(G)$ to
denote the set of neighbors of node $i$. Given a sequence of graphs $G(0), G(1), \ldots, G(k-1)$, we will use \jnt{the simpler notation $N_i(t),
d_i(t)$ \jnt{in place of} $N_i(G(t))$, $d_i(G(t))$, and we will \jnt{make a similar simplification for other variables of interest.} 

\jnt{We are interested in analyzing a consensus algorithm in which a node assigns equal weight to each one of its neighbors. We consider $n$ nodes and 
assume that at each discrete time $t$, node $i$ stores a real number $x_i(t)$. We let $x(t)=(x_1(t),\ldots,x_n(t))^T$.}
For any given} sequence of graphs $G(0), G(1), G(2), \ldots$, all on the node set $\{1, \ldots, n\}$, and \jnt{any} initial vector $x(0)$, the algorithm is \jnt{described by the update equation}
\begin{equation} \label{basicupdate} x_i(t+1) = \frac{1}{d_{i}(t)} \sum_{j \in N_{i}(t)}
x_j(t), \qquad \jnt{i=1,\ldots,n,}\end{equation}  
\jnt{which can also be written in the form
\begin{equation} \label{matrixupdate} x(t+1)=A(t)x(t),\end{equation} for a suitably defined sequence of matrices $A(0),$  $A(1),\ldots,$ $A(t-1)$.} 
The graphs $G(t)$, which appear in the above update rule through $d_i(t)$ and $N_i(t)$, 
correspond to information flow among the agents;
the edge \blue{$(i,j)$} is \jnt{present} in $G(t)$ if and only if agent \jnt{$i$} uses the value $x_{\jnt{j}}(t)$ of agent $\jnt{j}$ in its
update at time $t$. \jnt{To reflect the fact that} every agent always has access to its own information, we assume
that every graph $G(t)$ contains all the self-loops $(i,i)$; as a consequence, $d_i(t) \geq 1$ for all $i,t$. \blue{Note that we have $[A(t)]_{ij}>0$ if and only if $(i,j)$ is an edge in $G(t)$.}

We will say that the graph sequence $G(t)$ is $B$-connected if, for every $k \geq 0$, the graph obtained by taking the union of the	 edge sets of $G(kB), G(kB+1), \ldots, G((k+1)B-1)$
is connected. \jnt{It is well known} (\cite{TBA86, JLM03}) that if the graph sequence is $B$-connected for some positive integer $B$, then
\jnt{every component of $x(t)$ converges to a common value. In this paper, we focus on} the
convergence rate of this process in some natural settings. \jnt{To quantify the progress of the algorithm towards consensus,}
we will use the function $S(\jnt{x}) = \max_i x_i - \min_i x_i.$ \aot{For any $\epsilon>0$, 
a sequence of  stochastic matrices $A(0), A(1), \ldots, A(k-1)$
{\em
results in $\epsilon$-consensus} if \[ S(\jnt{A(k-1) \cdots A(1) A(0) x(k)}) \leq \epsilon S(\jnt{x(0)}) \] for all initial vectors $x(0)$; alternatively, a sequence
of graphs $G(0), G(1), \ldots$ achieves $\epsilon$-consensus if the sequence of matrices $A(t)$ defined by Equations (\ref{basicupdate}) and  (\ref{matrixupdate})
achieves $\epsilon$-consensus.} 

As mentioned previously, we will \jnt{focus} on graph sequences in which every graph $G(t)$ is undirected. There are a
number of reasons to be especially interested in undirected graphs within the context of consensus.  For example, $G(t)$
is undirected if: (i) $G(t)$  contains all the edges between agents that are physically within some distance of
each other; (ii) $G(t)$ contains all the edges between agents \jnt{that} have line-of-sight views of each other;
(iii) $G(t)$ contains the edges corresponding to pairs of agents  \jnt{that} can send messages to each other
using a protocol \jnt{that} relies on \jntn{acknowledgments.}

It is \jnt{an immediate} consequence of \jnt{existing convergence proofs} (\blue{\cite{BT}}, \cite{JLM03}) that 
\jnt{any
sequence of $C n^{nB} \blue{\ln} (1/\epsilon)$ undirected $B$-connected graphs, with self-loops at every node, results in
$\epsilon$-consensus. Here,} $C$ is a constant
\jnt{that} does not depend on the problem parameters $n$, $B$\jntn{,} and $\epsilon$. 
\jnt{We are interested in simple conditions on the graph sequence under which the undesirable $O(n^{nB})$ scaling becomes polynomial in $n$
and $B$.}

\subsection{Our results} Our contributions are as follows. First, in Section \ref{mainproof}, we prove our main result.

\begin{theorem} \jnt{Consider a sequence $G(0), G(1), \ldots, G(k-1)$ of 
$B$-connected undirected graphs with self-loops at
each node. Suppose that for each $i$ there exists some $d_i$ such that $d_i(t) \in \{1,d_i\}$ for all $t$ (note that $d_i(t)=1$ means node $i$
has no links to any other node).
 If the length $k$ of the graph sequence is at least $\aot{B+}4 n^3 B \blue{\ln} \frac{2 { n}}{\epsilon}$,
then $\epsilon$-consensus is achieved.} \label{mainthm} \end{theorem}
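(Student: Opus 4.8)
The plan is to run a quadratic Lyapunov argument in the inner product that is natural for this dynamics. The crucial structural observation is that, because every node uses equal weights and its degree is always either $1$ or the fixed value $d_i$, the matrix $A(t)$ is self-adjoint with respect to $\i[x]{y}:=\sum_i d_i x_iy_i$: one checks $d_i[A(t)]_{ij}=d_j[A(t)]_{ji}$ edge by edge, and this holds trivially at a node that is momentarily isolated (its only edge is the self-loop). Consequently the weighted mean $\bar x=\bigl(\sum_i d_ix_i\bigr)\big/\bigl(\sum_i d_i\bigr)$ is invariant under (\ref{matrixupdate}), every eigenvalue of $A(t)$ lies in $[-1,1]$ in this geometry, and $V(x):=\sum_i d_i(x_i-\bar x)^2$ is nonincreasing along the trajectory. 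I would reduce the theorem to a per-block contraction statement: if $V$ is multiplied by a factor at most $1-\mu$ over every window of $B$ consecutive steps, then $O\!\left(\mu^{-1}\ln(n/\epsilon)\right)$ such windows bring $V(x(k))$ below $(\epsilon^2/n^2)V(x(0))$, and $\epsilon$-consensus then follows from the two elementary inequalities $V(x(0))\le\tfrac14\bigl(\sum_i d_i\bigr)S(x(0))^2\le\tfrac{n^2}{4}S(x(0))^2$ (Popoviciu's inequality on variances, applied to the weights $d_i/\!\sum_j d_j$) and $S(x(k))^2\le4V(x(k))$ (using $d_i\ge1$). Matching the theorem amounts to obtaining $\mu=\Theta(1/n^3)$.

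For the per-step estimate, note that since every $G(t)$ carries self-loops and degrees never exceed $n$ we have $[A(t)]_{ii}\ge1/n$, hence every eigenvalue of $A(t)$ lies in $[-1+2/n,\,1]$. Writing $y(t):=x(t)-\bar x\1$ and using the identity $\i[(I-A(t))y]{y}=\sum_{\{i,j\}\in G(t),\,i\ne j}(y_i-y_j)^2$ (a direct computation; the self-loop terms cancel), this gives
\[
V(x(t))-V(x(t+1))=\i[(I-A(t)^2)y(t)]{y(t)}\ \ge\ \frac2n\sum_{\{i,j\}\in G(t),\,i\ne j}\bigl(x_i(t)-x_j(t)\bigr)^2 .
\]

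The heart of the argument is then a Poincaré-type inequality over one window $[T,T+B-1]$, whose edge-union graph is connected. Sort the nodes by the value $x_\cdot(T)$, and for each of the $n-1$ consecutive levels let $g_k\ge0$ be the corresponding gap and $(U_k,L_k)$ the induced cut (nodes above / below the level). Let $s_k$ be the \emph{first} time inside the window at which some edge crosses $(U_k,L_k)$; it exists by connectivity. The decisive point — and what makes the per-block bound independent of $B$ — is a monotonicity up to time $s_k$: until then the two sides never interact, so by induction the $U_k$-values stay above the level and the $L_k$-values stay below it, and therefore the crossing edge at time $s_k$ still has its two endpoints separated by at least $g_k$. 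Charging each level to its first-crossing (edge, time) pair — and using that an edge straddling several consecutive levels at once has an endpoint difference at least the sum of those gaps — the per-step bound summed over the window yields
\[
V(x(T))-V(x(T+B))\ \ge\ \frac2n\sum_{s=T}^{T+B-1}\ \sum_{\{i,j\}\in G(s),\,i\ne j}\bigl(x_i(s)-x_j(s)\bigr)^2\ \ge\ \frac2n\sum_{k=1}^{n-1}g_k^2 .
\]
Since $\sum_k g_k=S(x(T))$ we get $\sum_k g_k^2\ge S(x(T))^2/(n-1)$, while Popoviciu gives $S(x(T))^2\ge 2V(x(T))\big/\!\sum_i d_i$; combining these produces a per-block contraction factor $1-\Theta(1/n^{a})$, hence $\epsilon$-consensus in $O(n^{a}B\ln(n/\epsilon))$ steps.

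The difficulty I anticipate is entirely quantitative: the naive combination above gives $a=4$ — the three losses being the $2/n$ in the per-step bound, the factor $n-1$ in $\sum_k g_k^2\ge S^2/(n-1)$, and $\sum_i d_i\le n^2$ in Popoviciu — whereas the theorem claims $a=3$, so the real work is recovering one factor of $n$. The $2/n$ is the natural suspect: it is forced only by the a priori possibility that $A(t)$ has an eigenvalue near $-1$, but such modes decay within $O(n)$ steps and never bottleneck consensus, so a sharper accounting — splitting off the negative part of the spectrum of $A(t)$ and handling it over the whole window rather than step by step — should restore the missing $n$, and then tracking the constants and the additive ``$+B$'' (one extra block) yields exactly $B+4n^3B\ln(2n/\epsilon)$. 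Finally, it is worth stressing that the degree-constancy hypothesis is precisely what is being used here: it is what makes all the matrices $A(t)$ reversible with respect to one common measure, so that $V$ is a genuine Lyapunov function (equivalently, the associated backward random walk is reversible); the Cao--Spielman--Morse construction forces exponential convergence time exactly by destroying this common reversibility.
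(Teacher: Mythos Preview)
Your framework is essentially the paper's: the same weighted Lyapunov function $V'(x)=\sum_i d_i(x_i-\bar x)^2$, the same first-crossing argument across each $B$-window to manufacture $\sum_\ell g_\ell^2$, and the same endgame relating $V'$ to $S$. Your lower bound $\sum_\ell g_\ell^2\ge 2V'/\big((n-1)\sum_i d_i\big)$ is, up to constants, exactly the paper's Lemma~\ref{diff} bound $\sum_\ell g_\ell^2\ge V'/(n^2 d_{\max})$. So the sole source of your missing factor of $n$ is the per-step decrease estimate, and here there is a genuine gap.

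Your per-step bound passes through the spectral inequality $I-A^2\succeq\frac{2}{n}(I-A)$ in $\langle\cdot,\cdot\rangle_d$, which is where the $2/n$ enters; your proposed repair of ``splitting off the negative spectrum over the whole window'' does not work as stated, because the matrices $A(t)$ share no eigenbasis, so there is no persistent negative mode to amortize, and on a single step the inequality is sharp. The paper never goes through $I-A$. It computes the decrease directly as $V(x)-V(Ax)=\sum_{i<j}w_{ij}(x_i-x_j)^2$ with $w_{ij}=[A^TDA]_{ij}=\sum_{m\in N(i)\cap N(j)}1/d_m$, and then proves the purely combinatorial fact that for every sorted cut level $\ell$ the coefficient $W_\ell:=\sum_{i\le\ell<j}w_{ij}$ is at least $\tfrac12$: any node $m$ with neighbors on both sides of the cut (one exists by connectivity; the self-loop guarantees $d^-,d^+\ge1$) already contributes $d^+d^-/d_m\ge(d_m-1)/d_m\ge\tfrac12$. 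This gives $V(x)-V(Ax)\ge\tfrac12\sum_\ell g_\ell^2$ with no $1/n$ in sight, and plugging it into the rest of your pipeline yields the per-block factor $1-\tfrac{1}{2n^3}$ and hence the stated $B+4n^3B\ln(2n/\epsilon)$.
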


%

\old{As will be seen in the proof, it reveals that the essential property that results in a polynomial time to achieve $\epsilon$-consensus is the following: there exists a particular linear function of the vectors $x(t)$
that remains invariant, despite the time-varying nature of the underlying graphs. (For the case of doubly stochastic iteration matrices, this linear function is the sum of the components of $x(t)$. Our result suggests that there is nothing special about this particular linear combination.)}


In Section \ref{markov}, we give an interpretation of \jnt{our} results in
terms of Markov chains. Theorem \ref{mainthm} can be interpreted
as providing a sufficient condition for a random walk on a time-varying graph
to forget its initial distribution in polynomial time. 

\jnt{In} Section \ref{counterexamples}, we \red{capitalize on the Markov chain interpretation and} provide a simple proof that relaxing
the assumptions of \jnt{Theorem \ref{mainthm}} even slightly \jnt{can} lead to a convergence time which is
exponential in $n$. Specifically, if we replace the assumption that each $d_i(t)$ is independent of $t$  with the
weaker assumption that the \jnt{sorted} degree sequence (\jnt{say,} in non-increasing order) is independent of $t$ (thus allowing nodes to ``swap'' degrees), exponential convergence time is possible. \jnt{This was proved earlier by Cao, Spielman, and Morse (although unpublished) \cite{cao} and our contribution is to provide a simple proof}.


In summary: for \jnt{undirected} $B$-connected graphs with self-loops,  unchanging degrees is a sufficient condition for
polynomial time convergence, but relaxing it even slightly by allowing the nodes to ``swap'' degrees leads to the
possibility of exponential
convergence time.


\subsection{Previous work} There is considerable \jnt{and growing} 
literature \jnt{on} the convergence time of
consensus algorithms. The recent paper \cite{JLM03} \jntn{amplified the interest}  in consensus algorithms and spawned a vast \jntn{subsequent} literature, 
which is impossible to survey here. 
We only mention papers that are closest to our own work, omitting references to the literature \jnt{on} various aspects of consensus convergence times \jnt{that} we do not address here.

Worst-case \jnt{upper} bounds on the \jnt{convergence times} of consensus algorithms \jnt{have been} established in \cite{CSM05, CMA08a, CMA08b, AB1, AB2, Chazelle09}. The papers \cite{CSM05, CMA08a, CMA08b} considered a setting slightly more general than ours, and established exponential upper bounds. The papers \cite{AB1, AB2} \jnt{addressed} the convergence times of consensus algorithms in terms of spanning trees \jnt{that} capture the information flow
between the nodes. It was observed that in several cases this approach produces tight estimates of the convergence times.  We mention also \cite{MBCF07} which derives a polynomial-time upper \jntn{bound} on the time and total communication complexity required by a network of robotic agents to implement various deployment and coordination schemes.  \red{Reference} \cite{Chazelle09} take\red{s} a geometric approach, and consider\red{s} the convergence time \jnt{in a somewhat different model, involving   interactions between geographic\jntn{ally} nearest neighbors.} \red{It finds} that the convergence time is quite high (either singly exponential or iterated exponential, depending on the 
\blue{model}). Random walks on undirected graphs such as  \jntn{considered} here are special cases of reversible agreement systems considered in the related work \cite{Chazelle11} (see also \cite{Chazelle10b} and \cite{Chazelle10a}).  Our proo\jntn{f} techniques are heavily influenced by the classic paper \cite{LO81}
and share some similarities with those used in the recent work \cite{SZ07}, which used similar
ideas to bound the convergence time of some inhomogenuous Markov chains.  There are also similarities with the recent work \cite{AKL08} on the cover time
of time-varying graphs.


Our work differs from these papers in that it studies time-varying, $B$-connected graphs and \jntn{establishes} convergence time bounds \jntn{that}
are polynomial in $n$ and $B$. To the best of our
knowledge, polynomial bounds on the \jnt{particular consensus algorithm considered in this paper} had previously been derived earlier only in 
\cite{LO81} (under the assumption that the graph is fixed, undirected, with self-loops at every node), \cite{NOOT09} (in the case when the matrix is doubly stochastic, which in our \jntn{setting} corresponds to a sequence of regular graphs $G(t)$). For the special case of graphs that are connected at every time step \jntn{($B=1$)}, the result
has been apparently discovered independently by Chazelle \cite{Chazelle11b} and the 
authors \cite{Otalk}. \jntn{Our added generality allows for both disconnected graphs in which the degrees are kept constant, as well as the case where nodes temporarily disconnect from the network, setting their degree to one.}

\section{Proof of Theorem \ref{mainthm}\label{mainproof} }

\jnt{As in the statement} 
of Theorem \ref{mainthm}, \jnt{we assume} that we are given a sequence of \jnt{undirected $B$-connected} 
graphs $G(\blue{0}), G(1), \ldots$, with self-loops at each node, such
that $d_i(t)$ equals either $d_i$ or $1$. Observe that $d_i>1$ for all $i=1, \ldots, n$, since else the sequence of graphs $G(t)$ could not be $B$-connected. We will use the notation ${\cal G}$ to refer to the class of \jnt{undirected} graphs with self-loops at every node such that the degree of node $i$ either $1$ or $d_i$. Note that the definition of ${\cal G}$ depends on the values $d_1, \ldots, d_n$. 

Given an undirected graph $G$, we define the update matrix $A(G)$ by \begin{equation*}
[A(G)]_{ij} =
\begin{cases} {1}/{d_i(G)}, & \text{ if } j \in N_i(G),
\\
0, &\text{ \jnt{otherwise}.}
\end{cases}
\end{equation*} We use $A(t)$ as \jnt{a} shorthand for $A(G(t))$, so that Eq.\  (\ref{basicupdate}) can be written as 
\begin{equation}x(t+1) = A(t) x(t).
\label{eq:ax}\end{equation} 
Conversely, given an update matrix $A$ of the above form,  we will use $G(A)$ to denote the graph $G$ whose update
matrix is $A$. We use $\A$  to denote
the set of update matrices $A(G)$ \jnt{associated with} graphs $G \in \G$. We define 
$\d$ to be the vector $\d = [d_1, d_2, \ldots, d_n]^T$; a simple calculation shows that $\d^T A = \d^T$ for all $A \in \A$. Finally,
we use $D$ to denote the matrix whose \jntn{$i$th} diagonal element is $d_i$. 


We begin by identifying a weighted \jnt{average that is} preserved by the iteration $x(t+1)=A(t) x(t)$.
For any vector $y$, we let \[ \bar{y} = \frac{\d^T y}{\d^T \1} = \frac{\sum_{i=1}^n d_i y_i}{\sum_{i=1}^n d_i}, \] 
\jntn{where $\1$ is the vector with entries equal to 1.}
Observe that for any $A \in \A$,
\[ \overline{Ay} = \frac{\d^T A y}{\d^T \1} = \frac{\d^T y}{\d^T 1} = \bar{y}. \] Consequently, if $x(t)$ evolves according to Eq. (\ref{eq:ax}), then
$\overline{x(t)} = \overline{x(0)}$, which we will from now on denote simply by $\bar{x}$. 

With these preliminaries in place, we now proceed to the main part of our analysis, which \jnt{is} based on the pair of Lyapunov functions\[ V(x) = x^T D x = \sum_{i=1}^n d_i x_i^2, ~~\mbox{ and } ~~ V'(x) = \sum_{i=1}^n d_i (x_i-\bar{x})^2.
\] We will adopt the more convenient notation $V(t)$ for $V(x(t))$ and similarly $V'(t)$ for $V'(x(t))$. 

Our first lemma provides a convenient identity for matrices in $\A$.

\begin{lemma} \label{decomp} For any $A \in \A$ such that $G(A)$ is connected \jntn{(and in particular, every node $i$ has degree $d_i$),}
\[  A^T D A = D - \sum_{i<j} w_{ij} (\e_i- \e_j) (\e_i- \e_j)^T, \] where
$w_{ij}$ is the \jntn{$(i,j)$-th} entry of $A^TDA$. 
\end{lemma}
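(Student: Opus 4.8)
The plan is to set $M = A^T D A$ and verify the claimed identity entry by entry, relying on just two structural facts about $M$: that it is symmetric, and that its row sums are given by $\d$.

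First I would record the entrywise picture. Since $G(A)$ is connected, every node $i$ has degree $d_i$, so $[A]_{ij} = 1/d_i$ when $j \in N_i(G(A))$ and $0$ otherwise; in particular each row of $A$ sums to $1$, i.e.\ $A\1 = \1$, and, because the graph is undirected so that $d_j = |N_j|$, we have the identity $\d^T A = \d^T$ already noted in the text. Transposing the latter gives $A^T \d = \d$, and therefore
\[ M \1 = A^T D A \1 = A^T D \1 = A^T \d = \d, \]
so the $i$th row of $M$ sums to $d_i$. Also $M^T = A^T D^T A = M$ since $D$ is diagonal, so $M$ is symmetric; in particular $w_{ij} := [M]_{ij} = [M]_{ji}$ is unambiguous and the matrix $\sum_{i<j} w_{ij}(\e_i-\e_j)(\e_i-\e_j)^T$ on the right-hand side is well defined.

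Next I would compare the two sides directly. Write $L = \sum_{i<j} w_{ij}(\e_i-\e_j)(\e_i-\e_j)^T$; this is a weighted graph Laplacian, so its off-diagonal $(i,j)$ entry equals $-w_{ij}$ and its $(i,i)$ entry equals $\sum_{j\ne i} w_{ij}$. Hence for $i \ne j$ we get $[D - L]_{ij} = w_{ij} = [M]_{ij}$, so the off-diagonal entries agree by the very definition of $w_{ij}$. For the diagonal, $[D-L]_{ii} = d_i - \sum_{j\ne i} w_{ij} = d_i - \sum_{j \ne i} [M]_{ij}$, and this equals $[M]_{ii}$ precisely because $\sum_j [M]_{ij} = d_i$, which is the row-sum identity $M\1 = \d$ established above. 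Thus $M = D - L$, which is the claim.

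I do not expect a real obstacle here: once symmetry and $M\1 = \d$ are in hand, the rest is the routine observation that a symmetric matrix is determined by its off-diagonal entries together with its row sums, the correcting term being exactly a weighted Laplacian. The only point needing a bit of care is to check that the hypotheses are actually used — connectedness of $G(A)$ is what guarantees $d_i(G(A)) = d_i$, so that the $D$ appearing in $A^T D A$ is the ``right'' diagonal matrix, and undirectedness is what guarantees $\d^T A = \d^T$; without either of these the row sums of $M$ need not equal $\d$, and the identity would fail.
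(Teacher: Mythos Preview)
Your argument is correct and follows exactly the approach sketched in the paper's remark: both sides are symmetric, have off-diagonal $(i,j)$-entries equal to $w_{ij}$ by definition, and have row sums equal to $\d$, which forces equality on the diagonal as well. You simply supply the details (in particular the verification $M\1 = A^T D A\1 = A^T\d = \d$) that the paper leaves implicit.
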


\begin{remark} This was proven in \cite{TN11} and is a generalized version of a decomposition from \cite{XB04, NOOT09}. 
It may be quickly verified by checking that both sides of the equation are symmetric, have identical row
sums, and whenever $i<j$, the $(i,j)$\jntn{-}th element of both sides is $w_{ij}$. The equality of \jntn{the two} sides then immediately follows. 
\end{remark}

Our next lemma \jnt{quantifies the decrease of $V(\cdot)$ when a vector $x$ is multiplied} by some matrix $A \in \A$ \jntn{associated} with a connected graph $G(A)$.

\begin{lemma} \label{vdecrease} Fix $x \in \R^n$ and let $i: \{1, \ldots, n\} \rightarrow \{1, \ldots, n\}$ be a permutation such that $x_{i[1]} \leq x_{i[2]} \leq \cdots \leq x_{i[n]}$. For any $A \in \A$ such that $G(A)$ is connected,  
\[ V(Ax) \leq V(x) - \frac{1}{2} \sum_{l=1}^{n-1} (x_{i[l+1]}-x_{i[l]})^2. \]
\end{lemma}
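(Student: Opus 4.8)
The plan is to start from the identity in Lemma \ref{decomp}. Writing $V(Ax) = (Ax)^T D (Ax) = x^T (A^T D A) x$ and substituting $A^T D A = D - \sum_{i<j} w_{ij} (\e_i - \e_j)(\e_i - \e_j)^T$, we get
\[ V(Ax) = x^T D x - \sum_{i<j} w_{ij} (x_i - x_j)^2 = V(x) - \sum_{i<j} w_{ij}(x_i - x_j)^2, \]
so the whole lemma reduces to showing $\sum_{i<j} w_{ij}(x_i - x_j)^2 \geq \frac{1}{2}\sum_{l=1}^{n-1}(x_{i[l+1]} - x_{i[l]})^2$, where the $w_{ij}$ are the off-diagonal entries of $A^T D A$.

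The key observation is that the weights $w_{ij}$, viewed as edge weights, define a connected graph. Indeed $w_{ij} = [A^T D A]_{ij} = \sum_k d_k [A]_{ki}[A]_{kj}$, which is strictly positive whenever nodes $i$ and $j$ have a common neighbor $k$ in $G(A)$ (taking into account self-loops, $i$ and $j$ themselves count as their own neighbors). Since $G(A)$ is connected and has self-loops, the "common-neighbor" graph on the vertex set is also connected: any edge $(i,j)$ of $G(A)$ gives $w_{ij} > 0$ because $i$ is a common neighbor of $i$ and $j$ (self-loop at $i$ plus edge $(i,j)$). So the support of $\{w_{ij}\}$ contains a connected spanning subgraph.

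Now I would relabel so that $x_1 \leq x_2 \leq \cdots \leq x_n$ (i.e. absorb the permutation), so the target is $\sum_{i<j} w_{ij}(x_i - x_j)^2 \geq \frac12 \sum_{l=1}^{n-1}(x_{l+1}-x_l)^2$. The standard trick: for each consecutive gap index $l \in \{1,\dots,n-1\}$, pick any edge $(i,j)$ of the connected weight-graph with $i \leq l < j$ (such an edge exists by connectivity, since the cut separating $\{1,\dots,l\}$ from $\{l+1,\dots,n\}$ must be crossed). For that edge, $(x_i - x_j)^2 = (x_j - x_i)^2 \geq (x_{l+1}-x_l)^2$ since the interval $[x_i, x_j]$ contains $[x_l, x_{l+1}]$. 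One must also check the weights are large enough — here is where the factor $1/2$ enters: each $w_{ij}$ with $(i,j)$ an actual edge of $G(A)$ satisfies $w_{ij} \geq d_i [A]_{ii}[A]_{ij} + d_j[A]_{jj}[A]_{ji} = d_i \cdot \frac{1}{d_i}\cdot\frac{1}{d_i} + d_j\cdot\frac{1}{d_j}\cdot\frac{1}{d_j} = \frac{1}{d_i} + \frac{1}{d_j} \geq \frac{1}{n-1} + \frac{1}{n-1}$... but that's the wrong direction for a clean bound. The cleaner route is: since $G(A)$ is connected, for each cut index $l$ there is an edge of $G(A)$ crossing it, and for an edge $(i,j)$ of $G(A)$ we have $w_{ij} \geq \frac{1}{d_i}+\frac{1}{d_j} \geq \frac{2}{\max_k d_k}$; but to get exactly $1/2$ we instead note that it suffices to bound $w_{ij} \geq 1/2$ is false in general, so the actual argument must sum more carefully — choosing for each $l$ a single crossing edge, noting each edge of $G(A)$ can be "charged" by at most... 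Actually the right accounting is: $w_{ij} \ge \frac{1}{d_i}$ for an edge via the self-loop at $i$ term alone, and since consensus gaps telescope, one picks a single spanning path. Let me restructure: let $P$ be a path in $G(A)$ from the min-index to the max-index vertex; along $P$ the consecutive-difference telescoping gives $\sum_{(i,j)\in P}(x_i-x_j)^2 \geq \frac{1}{n-1}\big(\sum_l (x_{l+1}-x_l)\big)^2$...

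I expect the main obstacle to be exactly this combinatorial/weight-accounting step: getting the clean constant $\frac12$ requires choosing, for each consecutive pair $(i[l],i[l+1])$, an appropriate edge $(a,b)$ of $G(A)$ with $a \leq l < b$ in sorted order, using that each such edge has $w_{ab} \geq \frac12$ is too strong — so instead one shows $\sum_{i<j} w_{ij}(x_i-x_j)^2 \geq \sum_l \big(\min_{\text{edge across cut }l} w_{ab}\big)(x_{l+1}-x_l)^2$ by a cut-based rearrangement (each term $w_{ij}(x_i-x_j)^2$ with $i<j$ in sorted order decomposes as $w_{ij}\sum_{l=i}^{j-1}\big(\sum(x\text{-diffs})\big)^2 \geq w_{ij}\sum_{l=i}^{j-1}(x_{l+1}-x_l)^2$ after using $(\sum a_k)^2 \geq \sum a_k^2$ for nonnegative $a_k$... no, that inequality goes the other way). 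The genuinely careful version uses $(x_j - x_i)^2 = \big(\sum_{l=i}^{j-1}(x_{l+1}-x_l)\big)^2 \ge \sum_{l=i}^{j-1}(x_{l+1}-x_l)^2$ — which IS valid for nonnegative summands — hence $\sum_{i<j}w_{ij}(x_i-x_j)^2 \ge \sum_{l=1}^{n-1}\big(\sum_{i\le l<j} w_{ij}\big)(x_{l+1}-x_l)^2$, and then I just need $\sum_{i \le l < j} w_{ij} \ge \frac12$ for every $l$, i.e. the total weight crossing each cut is at least $\frac12$. This last fact follows because the cut is crossed in $G(A)$ by some edge $(a,b)$, and the total $w$-weight across the cut is at least the "degree-weighted" contribution, which I will verify equals at least $\sum_{k \text{ on one side with a neighbor on the other}} d_k \cdot \frac{1}{d_k}\cdot\frac{1}{d_k} \cdot 2 \geq$ ... — pinning down this constant is the crux, and I would handle it by the direct calculation that any node $k$ with neighbors on both sides of the cut, or any edge straddling the cut, forces $\sum_{i\le l<j}w_{ij} \ge 1/2$ via $w_{ab} \ge 1/d_a + 1/d_b$ together with an averaging over the (at least two) endpoints, or more simply by noting $\sum_{i \le l < j} w_{ij} = (\text{sum over rows } k)\, d_k [A]_{k,\le l}[A]_{k,>l}$ and each crossing node contributes $d_k \cdot \frac{p}{d_k}\cdot\frac{q}{d_k}$ with $p+q \le d_k$, $p,q\ge 1$, which is minimized appropriately. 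Once that constant is secured the lemma is immediate.

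Finally I would reassemble: $V(Ax) = V(x) - \sum_{i<j}w_{ij}(x_i-x_j)^2 \le V(x) - \frac12\sum_{l=1}^{n-1}(x_{i[l+1]}-x_{i[l]})^2$, which is the claim; the permutation $i[\cdot]$ enters only through relabeling and nothing in the argument depends on which labels the sorted values carry.
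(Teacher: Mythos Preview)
Your proposal, once the exploratory false starts are stripped away, is essentially the paper's proof. Both arguments (i) invoke Lemma~\ref{decomp} to write $V(Ax)=V(x)-\sum_{i<j}w_{ij}(x_i-x_j)^2$ with $w_{ij}=\sum_{k\in N(i)\cap N(j)}1/d_k$, (ii) use $(x_j-x_i)^2\ge\sum_{l=i}^{j-1}(x_{l+1}-x_l)^2$ for sorted $x$ to reduce to showing the cut weight $W_l=\sum_{i\le l<j}w_{ij}\ge 1/2$, and (iii) bound $W_l$ from below by the contribution of a single node $k$ with neighbors on both sides of the cut, namely $p\,q/d_k$ where $p,q$ count its neighbors on each side. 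Two small fixes to your final paragraph: you should have $p+q=d_k$ (not $\le$), and the missing last line is $pq/d_k\ge 1\cdot(d_k-1)/d_k\ge 1/2$ since connectivity forces $d_k\ge 2$.
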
 

\begin{proof} We may suppose without loss of generality that $x_1 \leq x_2 \leq \cdots \leq x_n$. Using Lemma~\ref{decomp}, 
\[ V(Ax) = (Ax)^T D (Ax) = x^T A^T D A x = V(x) - \sum_{i<j} w_{ij} (x_i - x_j)^2. \]  From the definitions of $w_{ij}, A$, and $D$, we
have that
\[ w_{ij} = \sum_{k \in N(i) \cap N(j)} \frac{1}{d(k)}, \] and so
\begin{equation} \label{lossbound}  V(Ax) = V(x) -  \sum_{i < j} (x_i - x_j)^2 \sum_{k \in N(i) \cap N(j)} \frac{1}{d(k)}. \end{equation} 

Observe that if $l<k$, then
\[ (x_k - x_l)^2 \geq (x_{l+1} - x_l)^2 + (x_{l+2} - x_{l+1})^2 + \cdots + (x_k - x_{k-1})^2 \] Applying this to each term of Eq. (\ref{lossbound}), we have that
\[ V(Ax) \leq V(x) - \sum_{i=1}^{n-1} W_i (x_i - x_{i+1})^2, \] where \begin{equation} \label{wdef} W_i = \sum_{k \leq i, ~l \geq i+1} \sum_{~m \in N(k) \cap N(l)} \frac{1}{d(m)} \end{equation}

We finish the proof by arguing that $W_i \geq 1/2$ for all \jntn{$i\leq n-1$.} Indeed, by \jntn{the connectivity} of $G(A)$, there is some node $j$ in $\{1, \ldots, i\}$ such that $j$ is connected to a node in $\{i+1, \ldots, n\}$. Let $d^+$ be the number of neighbors of node $j$ in $\{i+1, \ldots,n\}$ and $d^-$ be the number of neighbors of node $j$ in $\{1, \ldots, i\}$; naturally, $d_j = d^+ + d^-$ and both $d^+, d^-$ are at least $1$: the former by \jntn{the} definition of $j$\jntn{,} and the latter because node $j$ has a self-loop. Observe that the contribution to $W_i$ in Eq. (\ref{wdef})\jntn{,} by running $k$ over all the $d^-$ neighbors of $j$ in $\{1,\ldots,i\}$ and running $l$ over all $d^+$ neighbors of $j$ in $\{i+1, \ldots, n\}$\jntn{,} is at least \[ d^+ d^- \frac{1}{d_j} \geq \frac{d_j-1}{d_j} \geq \frac{1}{2}, \] where the final inequality is justified \jntn{because the connectivity of $G(A)$ implies that $d_j \geq 2$.} This concludes the proof. 
\end{proof} 

\begin{remark} We note that $V(Ax) \leq V(x)$, even if $G(A)$ is not connected; this follows by applying Eq. (\ref{lossbound}) to each connected component of 
$G(A)$. \label{nonincrease}
\end{remark}
\begin{lemma} Suppose \jntn{that} $x(t)$ evolves according to Eq. (\ref{eq:ax})\jntn{,} where $G(A(t))$ is a sequence of $B$-connected graphs from $\G$. Let $i: \{1, \ldots, n\} \rightarrow \{1, \ldots, n\}$ be a permutation such that $x_{i[1]}(kB) \leq x_{i[2]}(kB) \leq \cdots \leq x_{i[n]}(kB)$. Then, 
\[ V(x((k+1)B)) \leq V(x(kB)) - \frac{1}{2} \sum_{\jntn{l}=1}^{\jntn{n}} (x_{i[l+1]}(kB)-x_{i[l]}(kB))^2. \] \label{bconndec}
\end{lemma}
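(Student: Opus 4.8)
The plan is to adapt the argument of Lemma~\ref{vdecrease}, but with the relevant quantities evaluated at the single time step at which each ``threshold cut'' is first bridged. Assume without loss of generality that $x_1(kB) \le x_2(kB) \le \cdots \le x_n(kB)$, and set $g_l = x_{l+1}(kB) - x_l(kB) \ge 0$ for $l = 1, \ldots, n-1$; the goal is then $V(x((k+1)B)) \le V(x(kB)) - \frac{1}{2}\sum_{l=1}^{n-1} g_l^2$. For each $l$, call an edge joining $\{1, \ldots, l\}$ to $\{l+1, \ldots, n\}$ an \emph{$l$-edge}, and let $\tau_l \in \{kB, \ldots, (k+1)B-1\}$ be the first time in the window at which $G(\tau_l)$ contains an $l$-edge. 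Such a time exists because $B$-connectivity makes the union of the graphs over the window connected, and a connected graph has an edge across every cut.

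The first ingredient is a monotonicity observation: if no $l$-edge occurs at any of the times $kB, \ldots, t-1$, then $x_p(t) \le x_l(kB)$ for every $p \le l$ and $x_q(t) \ge x_{l+1}(kB)$ for every $q > l$. This is a one-step induction: in a graph with no $l$-edge every node $p \le l$ averages only over neighbors in $\{1, \ldots, l\}$, so its updated value stays at most $\max_{p' \le l} x_{p'}$, and symmetrically above the cut. In particular the bound holds for every $t \le \tau_l$.

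Next, write $V(x(kB)) - V(x((k+1)B)) = \sum_{t=kB}^{(k+1)B-1}\big(V(x(t)) - V(x(t+1))\big)$ and apply Eq.~(\ref{lossbound}), which is valid for disconnected graphs as well by Remark~\ref{nonincrease}, to each term: $V(x(t)) - V(x(t+1)) = \sum_{p<q} w_{pq}(t)(x_p(t) - x_q(t))^2$, where $w_{pq}(t) = \sum_{m \in N_p(t)\cap N_q(t)} 1/d_m(t)$ is the $(p,q)$ entry of $A(t)^T D A(t)$. The key estimate is that for any $p < q$,
\[ (x_p(t) - x_q(t))^2 \ \ge\ \sum_{l\,:\,p \le l < q,\ \tau_l \ge t} g_l^2 . \]
Indeed, for each such $l$ the cut is still unbroken at time $t$, so the monotonicity observation gives $x_p(t) \le x_l(kB)$ and $x_q(t) \ge x_{l+1}(kB)$; applying this to the least index $l_{-}$ and the greatest index $l_{+}$ in the index set yields $x_q(t) - x_p(t) \ge x_{l_{+}+1}(kB) - x_{l_{-}}(kB) = \sum_{j=l_{-}}^{l_{+}} g_j$, which is at least $\sum g_l$ over the set since the $g_j$ are nonnegative, and squaring gives the claim.

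Substituting the estimate and interchanging the order of summation groups the contributions by $l$:
\[ V(x(kB)) - V(x((k+1)B)) \ \ge\ \sum_{l=1}^{n-1} g_l^2 \sum_{t=kB}^{\tau_l}\ \sum_{p \le l < q} w_{pq}(t). \]
This is the step that neutralizes the apparent double counting when several cuts are bridged in the same step: for $t < \tau_l$ there is no $l$-edge, so $N_p(t) \cap N_q(t) = \emptyset$ whenever $p \le l < q$ (a common neighbor of $p$ and $q$ would itself lie on an $l$-edge), whence $w_{pq}(t) = 0$ for those pairs and only the term $t = \tau_l$ survives. The surviving inner sum $\sum_{p \le l < q} w_{pq}(\tau_l)$ is precisely the quantity $W_l$ of Eq.~(\ref{wdef}) for the graph $G(\tau_l)$, which contains an $l$-edge; the straddling-node argument from the proof of Lemma~\ref{vdecrease} --- an endpoint $j$ of an $l$-edge lying in $\{1,\dots,l\}$ has $d^{-}, d^{+} \ge 1$ with $d^{-} + d^{+} = d_j(\tau_l) = d_j \ge 2$, hence contributes $d^{-}d^{+}/d_j \ge 1/2$ --- shows $W_l \ge 1/2$, and the lemma follows. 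The only delicate point is this interchange-and-vanishing argument, and I expect the bookkeeping around ``which cut is bridged when'' to be the main thing to state carefully; everything else is a direct transcription of the single-graph case.
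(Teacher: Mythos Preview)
Your proof is correct. Both your argument and the paper's rest on the same three ingredients---the first crossing time $\tau_l$ for each cut, the monotonicity of values on either side before the cut is bridged, and the $W_l\ge 1/2$ bound from Lemma~\ref{vdecrease}---but you organize the bookkeeping differently. The paper first applies Lemma~\ref{vdecrease} componentwise to obtain the bound (\ref{intermediatesumbound}) in terms of consecutive pairs within each component, and then has to construct an \emph{injective} map from each index $i$ to a triplet $(i',i'',t)$ appearing in that sum; to make this injectivity argument go through it assumes strict ordering of the $x_i(kB)$ and closes by continuity. You instead go one level below Lemma~\ref{vdecrease}, working directly with the identity (\ref{lossbound}) and the raw weights $w_{pq}(t)$; your interchange of sums, together with the observation that $w_{pq}(t)=0$ for any cross-cut pair before $\tau_l$, replaces the injectivity argument entirely and makes the strict-inequality/continuity step unnecessary. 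The trade-off is that you need (\ref{lossbound}) for disconnected graphs in $\G$, which is true (it follows from $\mathbf d^TA=\mathbf d^T$ for all $A\in\A$, or by summing over components as in Remark~\ref{nonincrease}), but is only stated indirectly in the paper.
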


\begin{proof} It suffices to prove this under the assumption that $x_1(kB) < x_2(kB) < \cdots < x_n(kB)$; the general case then follows by a \jntn{continuity} argument. We apply the bound of Lemma \ref{vdecrease} at each time $t=kB, \ldots, (k+1)B-1$ to \jntn{each} connected component of $G(t)$. This yields that
\begin{equation} \label{intermediatesumbound} V((k+1)B) \leq V(kB) - \frac{1}{2} \sum_{t=kB}^{(k+1)B-1} \sum_{(q,l) \in C(t)} (x_q(t) - x_l(t))^2 \end{equation} Here, $C(t)$ contains all the pairs $(q,l)$ such that 
there is some component of $G(t)$ containing both $q$ and $l$, and $x_{\jntn{q}}(t)$ immediately follows $x_{\jntn{l}}(t)$ when the nodes in that
component are ordered according to increasing values of $x$. 


We then observe that for every $i=1, \ldots, n-1$ there is a first time $t$ between $kB$ and $(k+1)B-1$ when there is a link between a node in $\{1, \ldots, i\}$ and a node in $\{i+1, \ldots, n\}$. Note that because there have been no links between $\{1, \ldots, i\}$ and $\{i+1, \ldots, n\}$ from time $kB$ to time $t-1$,  we have that
\[ \max_{j = 1, \ldots, i} x_j(t) \leq x_i(kB) < x_{i+1}(kB) \leq \min_{j=i+1, \ldots, n} x_j (t). \] Moreover, at time $t$, the sum on the right-hand side of Eq. (\ref{intermediatesumbound}) will contain the term $(x_{i'}(t) - x_{i''}(t))^2$ where $i' \in {\rm arg}~ \max_{j=1, \ldots, i} x_j(t)$ and $i'' \in {\rm arg}~ \min_{j=i+1, \ldots, n} x_j(t)$.  We conclude that it is possible to associate with every $i$ some triplet $i',i'',t$ such that $t \in [kB, (k+1)B-1]$,  $(i',i'') \in C(t)$ and $(x_i(kB)-x_{i+1}(kB))^2 \leq (x_{i'}(t)-x_{i''}(t))^2$.

To complete the proof, we argue that distinct $i$ are associated with distinct triplets $i',i'',t$. Indeed, we associate $i$ with $i',i'',t$ only if  $x_{i'}(t) = \max_{j = 1, \ldots, i} x_j(t)$ and there have been no links between $\{1, \ldots, i\}$ and $\{i+1, \ldots, n\}$ from time $kB$ to time $t-1$. Consequently if two indices $i_1<i_2$ are associated with the same triplet, it follows that ${\rm arg} ~ \max_{j = 1, \ldots, i_1} x_j(t) \cap {\rm arg} ~ \max_{j=1, \ldots, i_2} x_j(t) \neq \emptyset$ which cannot be: at time $kB$, $x_{i_2}(kB) \geq x_{i_1+1}(kB) > \max_{j=1, \ldots, i_1} x_j(kB)$ and no link between a node in $\{1, \ldots, i_1\}$ and a node $\{i_1+1, \ldots, n\}$ occured from time $kB$ to time $t-1$.
\end{proof}

The following lemma may be verified through a direct calculation.

\begin{lemma} Suppose $u_1, \ldots, u_n$ and $w_1, \ldots, w_n$ are numbers satisfying
\[ \sum_{i=1}^n d_i u_i = \sum_{i=1}^n d_i w_i. \] Then 
\[ \sum_{i=1}^n d_i (u_i - z)^2 - \sum_{i=1}^n d_i (w_i - z)^2 \] is a 
constant independent of the number $z$. \label{zchange}
\end{lemma}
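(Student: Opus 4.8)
The plan is to expand the two quadratics as polynomials in $z$ and track the dependence on $z$ coefficient by coefficient. Write $P(z) = \sum_{i=1}^n d_i (u_i-z)^2$ and $Q(z) = \sum_{i=1}^n d_i (w_i-z)^2$. First I would expand
\[ P(z) = \Big(\sum_{i=1}^n d_i\Big) z^2 - 2\Big(\sum_{i=1}^n d_i u_i\Big) z + \sum_{i=1}^n d_i u_i^2, \]
and analogously for $Q(z)$ with $w_i$ replacing $u_i$ throughout.

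Subtracting, the coefficient of $z^2$ in $P(z)-Q(z)$ is $\sum_i d_i - \sum_i d_i = 0$, so the quadratic dependence disappears unconditionally. The coefficient of $z$ in $P(z)-Q(z)$ equals $-2\big(\sum_i d_i u_i - \sum_i d_i w_i\big)$, which vanishes precisely by the hypothesis $\sum_i d_i u_i = \sum_i d_i w_i$. What remains is $P(z)-Q(z) = \sum_i d_i u_i^2 - \sum_i d_i w_i^2$, an expression with no occurrence of $z$, which is the asserted constant. An equivalent, even shorter route is to differentiate: $\frac{d}{dz}\big(P(z)-Q(z)\big) = -2\sum_i d_i(u_i-z) + 2\sum_i d_i(w_i-z) = -2\big(\sum_i d_i u_i - \sum_i d_i w_i\big) = 0$ for all $z$, so $P-Q$ is constant in $z$.

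There is no genuine obstacle here — as the lemma statement itself notes, this is a direct calculation. The only subtlety worth flagging in the writeup is not to over-claim the role of the hypothesis: the $z^2$ terms cancel on their own, and it is exactly the linear-in-$z$ term whose cancellation uses the assumed equality of the $d$-weighted sums. I would therefore present just the expand-and-cancel computation above, which is a couple of lines.
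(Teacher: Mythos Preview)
Your proof is correct and is exactly the ``direct calculation'' the paper alludes to without writing out: the paper gives no explicit proof of this lemma, merely noting that it ``may be verified through a direct calculation,'' and your expand-and-cancel argument (or the equivalent differentiation shortcut) is precisely that calculation.
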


\begin{corollary} Suppose $x(t)$ evolves according to Eq. (\ref{eq:ax}) where $G(A(t))$ is a sequence of $B$-connected graphs from $\G$. Let $i: \{1, \ldots, n\} \rightarrow \{1, \ldots, n\}$ be a permutation such that $x_{i[1]}(kB) \leq x_{i[2]}(kB) \leq \cdots \leq x_{i[n]}(kB)$. Then, 
\[ V'(x((k+1)B)) \leq V'(x(kB)) - \frac{1}{2} \sum_{k=1}^n (x_{i[l+1]}(kB)-x_{i[l]}(kB))^2. \] \label{vprime}
\end{corollary}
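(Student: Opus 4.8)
The plan is to reduce the claimed bound on $V'$ directly to the bound on $V$ already proved in Lemma~\ref{bconndec}, using the invariance of the weighted mean $\bar{x}$ together with the elementary identity of Lemma~\ref{zchange}.

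First I would recall that, by the computation preceding the Lyapunov-function analysis, $\overline{x(t)} = \bar{x}$ for all $t$; in particular $\sum_{i=1}^n d_i x_i((k+1)B) = \sum_{i=1}^n d_i x_i(kB)$. This is exactly the hypothesis needed to invoke Lemma~\ref{zchange} with $u_i = x_i((k+1)B)$ and $w_i = x_i(kB)$: the quantity $\sum_{i=1}^n d_i (x_i((k+1)B) - z)^2 - \sum_{i=1}^n d_i (x_i(kB) - z)^2$ is independent of $z$. Evaluating it at $z = 0$ gives $V(x((k+1)B)) - V(x(kB))$, while evaluating it at $z = \bar{x}$ gives $V'(x((k+1)B)) - V'(x(kB))$; hence these two differences are equal.

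It then remains to apply Lemma~\ref{bconndec} to the first difference. Since the permutation $i[\cdot]$ sorting $x(kB)$ is the same one appearing in both statements (by hypothesis), and since the subtracted term $\tfrac12 \sum_l (x_{i[l+1]}(kB) - x_{i[l]}(kB))^2$ is expressed purely in terms of the values at time $kB$, substituting the equality from the previous step into Lemma~\ref{bconndec} yields the asserted inequality for $V'$ verbatim.

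I do not anticipate any genuine obstacle: the argument uses only the invariance of $\d^T x(t)$, Lemma~\ref{zchange}, and Lemma~\ref{bconndec}. The only point requiring a little care is index bookkeeping — the sum is to be read as running over $l = 1, \ldots, n-1$, consistently with Lemma~\ref{bconndec} — and confirming that the sorting permutation is indeed common to both lemmas, which it is by assumption.
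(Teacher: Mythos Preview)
Your proposal is correct and follows essentially the same approach as the paper: use the invariance of $\d^T x(t)$ together with Lemma~\ref{zchange} (evaluated at $z=0$ and $z=\bar{x}$) to transfer the decrease estimate of Lemma~\ref{bconndec} from $V$ to $V'$. Your remark about the summation index is also apt; the sum should indeed be read as running over $l=1,\ldots,n-1$.
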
 

\begin{proof} Lemma \ref{bconndec} may be restated as 
\[ \sum_{i=1}^n d_i (x_i(kB) - 0)^2 - \sum_{i=1}^n d_i (x_i((k+1)B)-0)^2 \leq  \frac{1}{2} \sum_{k=1}^n (x_{i[l+1]}(kB)-x_{i[l]}(kB))^2
\]
But since $\d^T x((k+1)B) = \d^T x(kB)$, we can apply Lemma \ref{zchange} to obtain 
\[ \sum_{i=1}^n d_i (x_i(kB) - \bar{x})^2 - \sum_{i=1}^n d_i (x_i((k+1)B)-\bar{x})^2 \leq \frac{1}{2} \sum_{k=1}^n (x_{i[l+1]}(kB)-x_{i[l]}(kB))^2, \] which
is a restatement of the current corollary.
\end{proof}

\begin{remark} An additional consequence of Lemma \ref{zchange} is that $V'(Ax) \leq V'(x)$ for all $A \in \A$. Remark \ref{nonincrease} had established this
property for $V(\cdot)$ and Lemma \ref{zchange} implies now the same property holds for $V'(\cdot)$. 
\end{remark}

\begin{lemma} For any $x$,   \[ \frac{\sum_{l=1}^{n-1} (x_{i[l+1]}-x_{i[l]})^2 }{V'(x)} \geq \frac{1}{n^2 d_{\rm max}},\]
where $d_{\rm max}$ is the largest of the degrees $d_i$. \label{diff}
\end{lemma}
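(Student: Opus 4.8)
The plan is to read the claim as the equivalent (and unambiguous) inequality $n^2 d_{\rm max} \sum_{l=1}^{n-1}(x_{i[l+1]}-x_{i[l]})^2 \ge V'(x)$, reduce to the case in which the coordinates of $x$ are already sorted, and then chain together three elementary facts: the variational characterization of the weighted average $\bar x$, the crude bound $d_i \le d_{\rm max}$, and the Cauchy--Schwarz inequality applied to telescoping sums.

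First I would assume without loss of generality (by relabeling the nodes, which permutes the $x_i$ and the $d_i$ consistently and preserves all quantities in the claim) that $x_1 \le x_2 \le \cdots \le x_n$, so that $i[l]=l$ and the numerator equals $N := \sum_{l=1}^{n-1}(x_{l+1}-x_l)^2$. Since $\bar x = \d^T x / \d^T \1$ is the $\d$-weighted average of the $x_i$, it minimizes the function $c \mapsto \sum_{i=1}^n d_i (x_i - c)^2$; in particular, choosing the anchor $c = x_1$,
\[ V'(x) = \sum_{i=1}^n d_i (x_i - \bar x)^2 \le \sum_{i=1}^n d_i (x_i - x_1)^2 \le d_{\rm max} \sum_{i=1}^n (x_i - x_1)^2. \]

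It then remains to bound $\sum_{i=1}^n (x_i - x_1)^2$ by $n^2 N$. Writing $x_i - x_1 = \sum_{l=1}^{i-1}(x_{l+1}-x_l)$ as a telescoping sum and applying Cauchy--Schwarz gives $(x_i - x_1)^2 \le (i-1)\sum_{l=1}^{i-1}(x_{l+1}-x_l)^2 \le n\,N$; summing over $i = 1, \ldots, n$ yields $\sum_{i=1}^n (x_i-x_1)^2 \le n^2 N$. Combining this with the previous display gives $V'(x) \le n^2 d_{\rm max}\, N$, which is exactly the desired bound.

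I do not expect a serious obstacle here; the only points that require a little care are the variational property of the weighted mean $\bar x$ (which is what permits replacing $\bar x$ by the convenient, $l$-independent anchor $x_1$ while only increasing the sum), the relabeling reduction, and keeping the Cauchy--Schwarz bookkeeping tight enough that the final constant comes out as $1/(n^2 d_{\rm max})$ rather than something weaker.
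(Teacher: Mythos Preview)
Your argument is correct. Both proofs sort the coordinates and ultimately rely on Cauchy--Schwarz applied to telescoping differences, but the way the denominator $V'(x)$ is handled differs. The paper normalizes so that $\sum_i d_i x_i=0$ and $\sum_i d_i x_i^2=1$, then uses a pigeonhole-type argument to locate a single index $l$ with $d_l x_l^2\ge 1/n$, hence $x_l\ge 1/\sqrt{n d_{\max}}$; combined with $x_1<0$ this gives a lower bound on one telescoping sum $x_l-x_1$, to which Cauchy--Schwarz is applied once. You instead avoid any normalization by invoking the variational property of the weighted mean to replace $\bar x$ by the anchor $x_1$, bound every term $(x_i-x_1)^2$ via Cauchy--Schwarz, and sum. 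Your route is arguably a bit more direct (no normalizations, no case $x_l>0$ versus $x_l<0$), while the paper's extraction of a single large coordinate is the classical Landau--Odlyzko device and makes the source of the factor $1/(n d_{\max})$ more transparent; both land on exactly the same constant $1/(n^2 d_{\max})$.
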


\begin{proof} \jntn{We employ} a variation of an argument
\jnt{first used} in \cite{LO81}. We first argue that we can make \ao{three} assumptions without loss of generality: \aot{1) that the components
of $x$ are sorted in nondecreasing order, i.e., $x_1 \leq x_2 \leq \cdots \leq x_n$;}
2) $\sum_i d_i x_i = 0$, since both the numerator and denominator on the left-hand side are invariant under the
addition of a constant to each component of $x$, \jntn{and in particular, $V(x)=V'(x)$;} 3) $\jntn{V'(x)=}\sum_i d_i x_i^2=1$, since the expression \jntn{on} the left-hand side remains invariant
under multiplication of each component of $x$ by a nonzero constant. 

\jntn{Let} $l$ be such that $d_l x_l^2 = \max_i d_i x_i^2$. 
Without loss of generality,
we can assume that $x_l>0$; else, we replace $x$ by $-x$. The condition that $\sum_i d_i x_i^2=1$
implies that $x_l \geq 1/\sqrt{n d_{\rm max}}$ while the condition that $\sum_i d_i x_i = 0$ implies \aot{$x_1<0$}.
Consequently, $x_l - x_{\aot{1}} \geq 1/\sqrt{n d_{\rm max}}$.We can write this as  \aot{
\[ (x_{2} - x_{1}) + (x_{3} - x_{2}) + \cdots + (x_{l} - x_{l-1}) \geq \frac{1}{\sqrt{n d_{\rm max}}}.\] Applying \jnt{the} Cauchy-Schwarz \jnt{inequality}, we get
\[ (l-1) \sum_{i=1}^{l-1} (x_{i+1} - x_{i})^2 \geq \frac{1}{n d_{\rm max}}.\] 
\jnt{We then use} the fact that $l-1 \leq n$ to complete the proof. }
\end{proof}

\jnt{We can now complete the proof of Theorem \ref{mainthm}.}

\begin{proof}[Proof of Theorem \ref{mainthm}] From Corollary \ref{vprime} and Lemma \ref{diff}, we have that for all integer $k \geq 0$,
\[ V'((k+1)B) \leq (1 - \frac{1}{2n^3}) V'(kB). \]

\jnt{Because the definition of $\epsilon$-consensus is in terms of $S(x)$ rather than $V'(x)$, we need to relate these two quantities.}
On the one hand, \jnt{for every $x$, we have} \[  V'(\jnt{x}) = \sum_{i=1}^n d_i(\jnt{x_i} - \bar{x})^2 \leq n \sum_{i=1}^n (\jnt{x_i} - \bar{x})^2 \leq n^2 S^2(\jnt{x}) \label{upperbound}. \]
On the other hand, 
\jnt{for every $x$, we have
$$V'(x) \geq \max_i (x_i-\bar{x})^2 \geq \frac{1}{4}
(\max_i x_i -\min_i x_i)^2 =\frac{1}{4} S^2(x).$$
Suppose that \aot{$t\geq B + 4 B n^3\ln(\blue{2} { n}/\epsilon)$.  Then at least $\lceil 4 n^3 \ln 2n/\epsilon \rceil$ time periods\footnote{The notation $\lceil x \rceil$ means the smallest integer which is at least $x$.} of length $B$ have 
passed, and therefore} 
\blue{\[
S(x(t)) \leq \sqrt{4 V'(x(t))}  \leq  2\Big(1 -\frac{1}{2n^3}\Big)^{{ 4 } n^3\ln(2 { n}/\epsilon) { (1/2)}} { \sqrt{V'(x(0))}}  
\leq  2 { n} e^{-\ln(2 { n}/\epsilon)}S(x(0)) =\epsilon S(x(0)).
\]}
(We have used here the inequality $(1-1/x)^x \leq e^{-1}$, for $x \geq 1$ as well as the fact that $V'(\cdot)$ is nonincreasing.)} 
\end{proof}

\section{Markov chain interpretation \label{markov}} 
\comm{In this section, the use of $t$, $k$ has been reversed. Fix in the journal version.} 
In this \jnt{section,} we give an alternative interpretation of the convergence time of a consensus algorithm in terms of \jnt{inhomogeneous} Markov
chains; this interpretation will be used in the next section. We refer the reader to the recent monograph \cite{LPW08} for the requisite background on Markov chains and random walks.

We consider an inhomogeneous Markov chain whose transition probability matrix at time $k$ is $A(k)$. We fix some time $t$ and define
$$P=A(0) A(1)\cdots A(t-1).$$
This is the associated $t$-step transition probability matrix: the $(i,j)$-th entry of $P$, denoted by $p_{ij}$, is the probability that the state at time $t$ is $j$, given that the initial state is $i$. Let $\p_i$ be the vector whose $k$th component is $p_{ik}$; thus $\p_i^T$ is the $i$th row of $P$.

We address a question which is generic in the study of Markov chains, namely, whether the chain eventually 
``forgets'' its initial state, i.e., whether \blue{for all $i,j$, 
$\p_i - \p_j$ converges to zero as $t$ increases,}  and if so, at what rate.  
We will say that the sequence of matrices $A(0), A(1), \ldots, A(t-1)$ is {\em  $\epsilon$-forgetful} if for all $i,j$, we have 
\jnt{
$$\frac{1}{2} \sum_k |p_{ik}- p_{jk}| \leq \epsilon.$$
The} \blue{above} \jnt{quantity}, $\jnt{\frac{1}{2}} \max_{i,j} \|\p_i - \p_j\|_1$ is known as the {\em coefficient of ergodicity}
of the matrix $P$, and appears often in the study of consensus algorithms (see, for example, \cite{CSM05}). 
\jnt{The result that follows relates
the times to achieve $\epsilon$-consensus or $\epsilon$-forgetfulness,}
\blue{and is essentially the same as Proposition 4.5 of \cite{LPW08}.}

\begin{proposition} The sequence of matrices $A(0), A(1), \ldots,$ $A(t-1)$ is $\epsilon$-forgetful if and only if the sequence of matrices $A(t-1),A(t-2),\ldots,A(0)$ results in $\epsilon$-consensus (i.e., $S(Px)\leq \epsilon S(x)$, for every vector $x$.) \label{forget} \end{proposition}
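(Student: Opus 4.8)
The plan is to reduce both conditions to a statement about the zero‑sum vectors $\p_i-\p_j$ and then invoke a duality between the seminorm $S(\cdot)$ and the $\ell_1$‑norm restricted to such vectors. First I would record the elementary identity
$$ S(Px) \;=\; \max_i (\p_i^T x) - \min_j (\p_j^T x) \;=\; \max_{i,j}\,(\p_i-\p_j)^T x, $$
valid for every $x$, so that ``$A(t-1),A(t-2),\ldots,A(0)$ results in $\epsilon$‑consensus'' is exactly the requirement that $(\p_i-\p_j)^T x \le \epsilon\, S(x)$ for all $i,j$ and all $x$. Since each row of $P$ sums to $1$, every difference $v:=\p_i-\p_j$ satisfies $\sum_k v_k=0$; I would keep this observation front and center.

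The core of the argument is the claim that for any vector $v$ with $\sum_k v_k=0$,
$$ \sup_{x:\,S(x)\le 1} v^T x \;=\; \tfrac12\,\|v\|_1 . $$
To prove ``$\le$'', split $v$ into its positive and negative parts $v=v^+-v^-$; the zero‑sum property gives $\sum_k v_k^+=\sum_k v_k^-=\tfrac12\|v\|_1$, and hence $v^T x=\sum_k v_k^+ x_k-\sum_k v_k^- x_k \le (\sum_k v_k^+)\max_k x_k - (\sum_k v_k^-)\min_k x_k = \tfrac12\|v\|_1\,S(x)$. For ``$\ge$'' (the case $v=0$ being trivial, so assume $v\ne 0$), take $x$ to be the indicator vector of $\{k:v_k>0\}$: since the zero‑sum condition forces $v$ to have entries of both signs, $x$ has both a $0$ and a $1$, so $S(x)=1$, while $v^T x=\sum_k v_k^+=\tfrac12\|v\|_1$. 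This lemma is the only real content; it is short, but it is where the factor $\tfrac12$ and the coefficient‑of‑ergodicity normalization come from, and so I expect it to be the main (if modest) obstacle.

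With the lemma in hand I would finish by chasing quantifiers. Because $S$ is a seminorm vanishing exactly on constant vectors, and $(\p_i-\p_j)^T x=0$ whenever $x$ is constant, the inequality $(\p_i-\p_j)^T x \le \epsilon\, S(x)$ for all $x$ is equivalent, by homogeneity (rescaling any $x$ with $S(x)>0$), to $\sup_{S(x)\le 1}(\p_i-\p_j)^T x \le \epsilon$, which by the lemma is $\tfrac12\|\p_i-\p_j\|_1\le\epsilon$. Taking the conjunction over all $i,j$ and using the identity for $S(Px)$, we conclude that ``$S(Px)\le \epsilon S(x)$ for all $x$'' holds if and only if $\tfrac12\max_{i,j}\|\p_i-\p_j\|_1\le\epsilon$, i.e., if and only if $A(0),A(1),\ldots,A(t-1)$ is $\epsilon$‑forgetful, which is precisely the assertion of the proposition.
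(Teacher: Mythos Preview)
Your proof is correct and follows essentially the same approach as the paper: both exploit the duality between the seminorm $S(\cdot)$ and the $\ell_1$-norm on zero-sum vectors, with the paper handling the two directions via the explicit midpoint shift $c=(\max_k x_k+\min_k x_k)/2$ and the sign vector $x_k=\pm\tfrac12$, which are exactly the extremizers underlying your lemma. Your presentation is slightly more abstract in isolating the identity $\sup_{S(x)\le 1} v^T x=\tfrac12\|v\|_1$ as a standalone claim, but the content is the same.
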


\begin{proof} Suppose that the matrix sequence $A(0), A(1), \ldots,$ $A(t-1)$ is $\epsilon$-forgetful, i.e., that $\frac{1}{2}\sum_k |p_{ik}-p_{jk}|\leq \epsilon$, for all $i$ and $j$.
Given a vector $x$, let $c=(\max_k x_k + \min_k x_k)/2$. Note that
$\|x-c\1\|_{\infty} = (\max_k x_k - \min_k x_k)/2=S(x)/2$. 
We then have
\[ | [Px]_i - [Px]_j |
=\Big| \sum_k (p_{ik} -p_{jk}) (x_k-c)\Big| 
\leq \|\p_i - \p_j\|_1 \cdot \|x -c\1\|_{\infty} 
\leq \epsilon S(x).\]
Since this is true for every $i$ and $j$, we obtain $S(Px)\leq \epsilon S(x)$, and the sequence $A(t-1),A(t-2),\ldots,A(0)$ results in $\epsilon$-consensus.

Conversely, suppose that the sequence of matrices $A(t-1), A(t-2), \ldots, A(0)$ results in $\epsilon$-consensus. Fix some $i$ and $j$. Let $x$ be a vector whose $k$th component is $1/2$ if $p_{ik}\geq p_{jk}$ and $-1/2$ otherwise. Note that $S(x)=1$. We have
$$\frac{1}{2} \|\p_i - \p_j\|_1
= (\p_i^T - \p_j^T)x = [Px]_i - [Px]_j \leq \epsilon S(x)=\epsilon,$$
where the last inequality made use of the $\epsilon$-consensus assumption. Thus, \ao{the} sequence of matrices $A(0), A(1), \ldots, A(t-1)$ is $\epsilon$-forgetful.
\end{proof} 

\blue{We will use Proposition \ref{forget} for the special case of Markov chains that are random walks. Given an undirected graph sequence \aot{sequence $G(0), G(1), \ldots$,
we consider the random walk on the state-space $\{1, \ldots, n\}$ which, at time $t$, jumps to a uniformly chosen random neighbor of its current state in $G(t)$. We let $A(0), A(1), \ldots$ be the associated transition probability matrices. 
We will say that a sequence of graphs is $\epsilon$-forgetful whenever \ao{the corresponding sequence of transition probability
matrices is $\epsilon$-forgetful.}}}
\jnt{Proposition \ref{forget}} allows us to reinterpret Theorem \ref{mainthm} as follows: random walks on time-varying \jnt{undirected $B$-connected}  graphs with
self-loops \red{and degree constancy} forget their initial distribution in a polynomial number of steps. 

%
%
%

\section{A counterexample \label{counterexamples}} 

\begin{figure}
\centering
\begin{tabular}{cc}
\epsfig{file=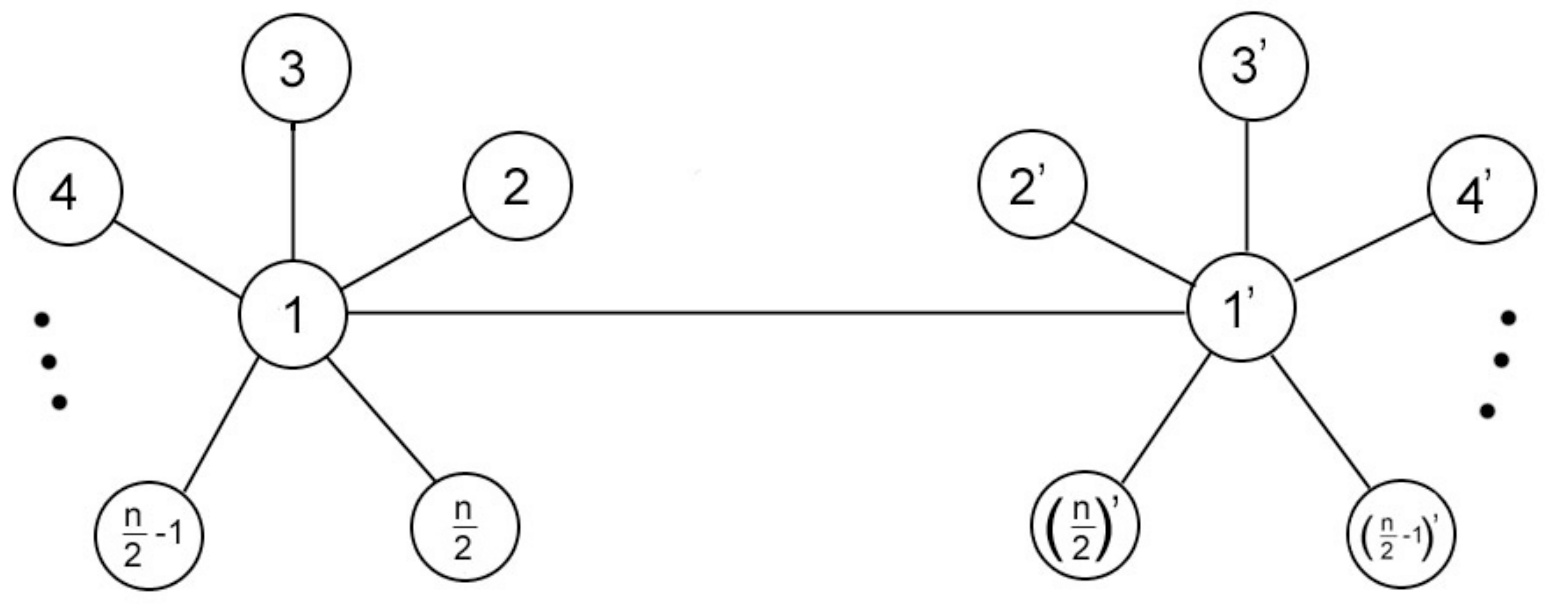,width=0.45\linewidth,clip=} &
\epsfig{file=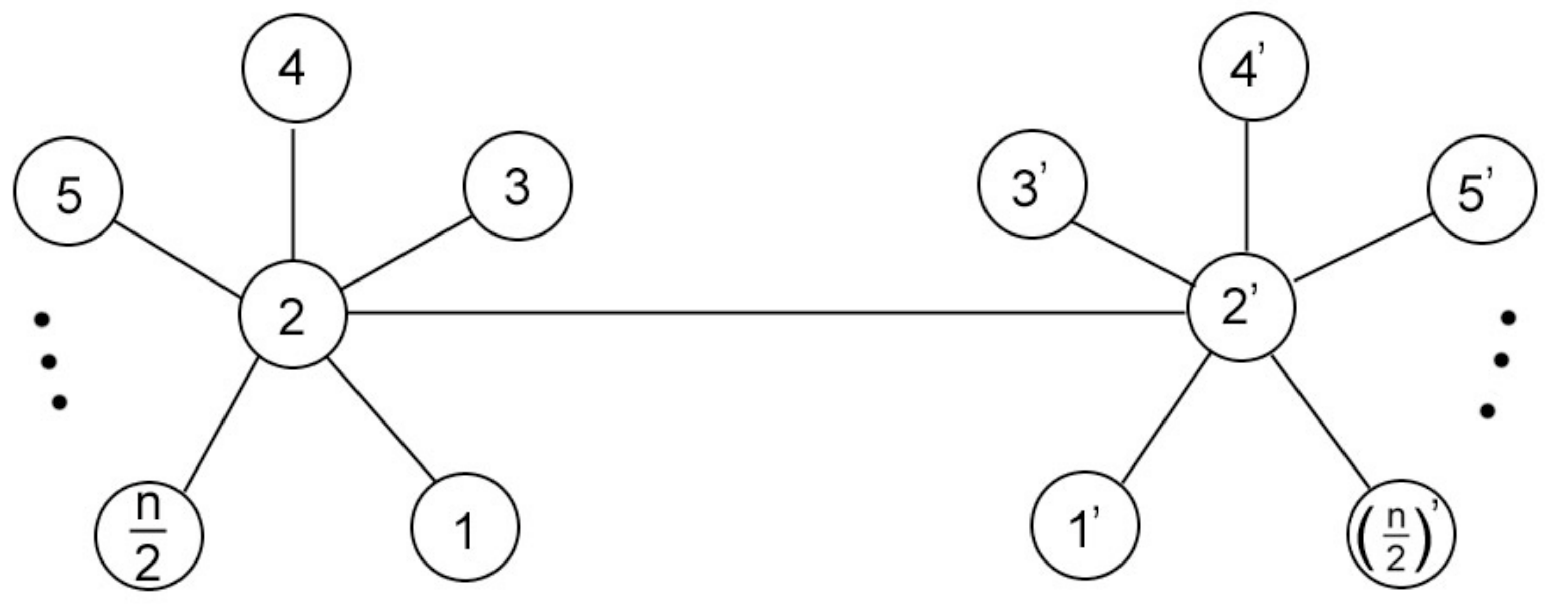,width=0.45\linewidth,clip=} \\
\epsfig{file=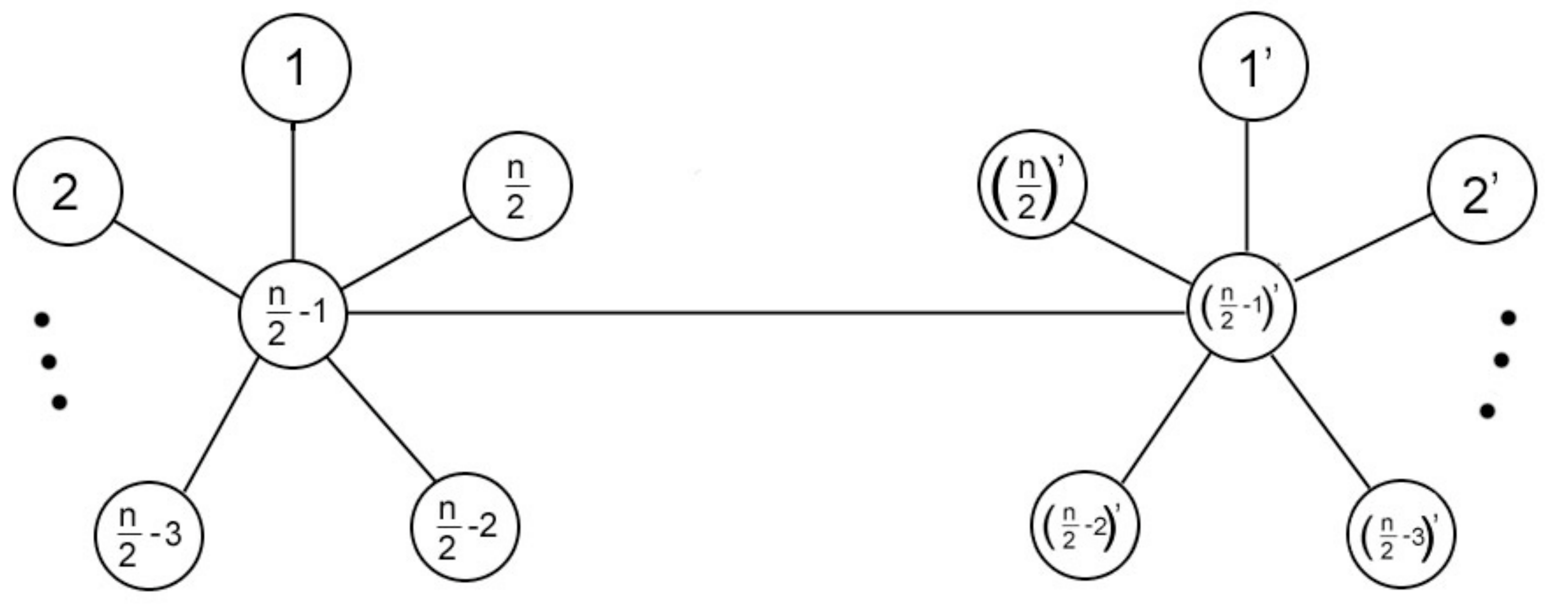,width=0.45\linewidth,clip=} &\epsfig{file=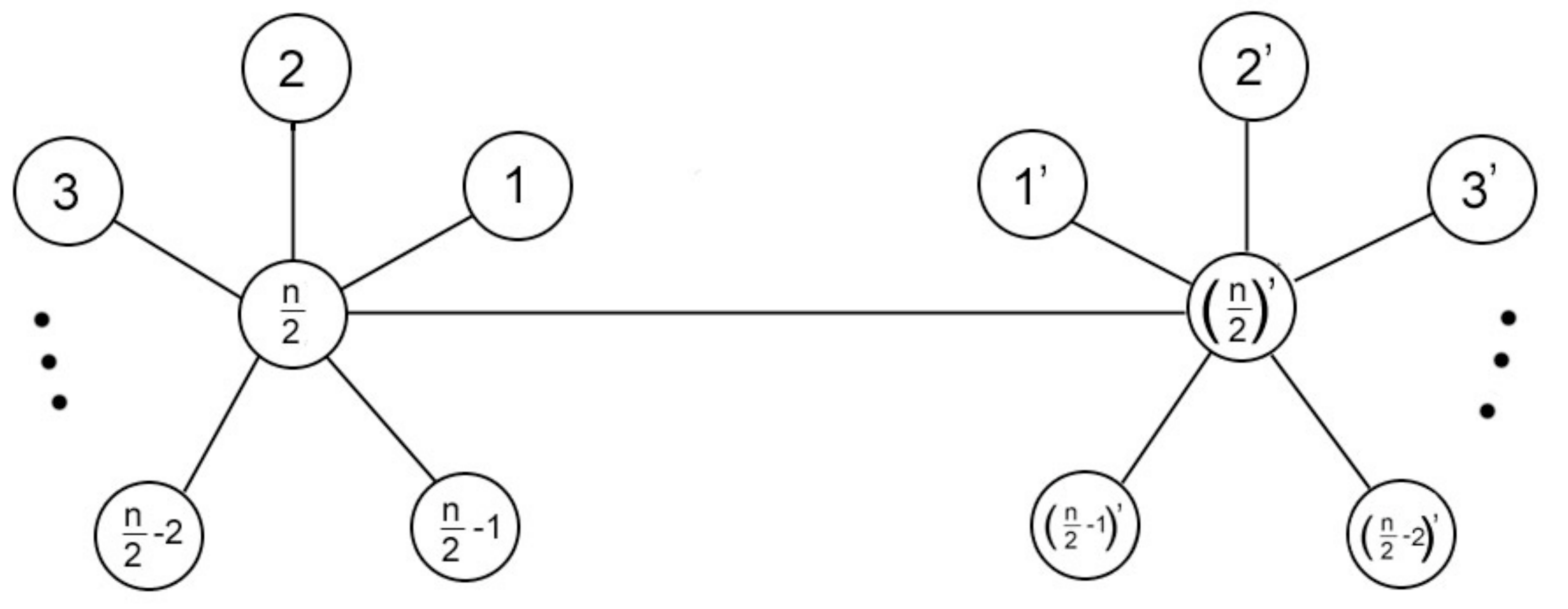,width=0.45\linewidth,clip=}
\end{tabular} \caption{The top-left figure shows graph $G(0)$; top-right
shows $G(1)$; bottom-left shows $G(\jnt{(n/2)-2})$; bottom-right shows $G(\jnt{(n/2)-1})$. As these figures illustrate, $G(t+1)$ is obtained by
applying a circular shift to each half of $G(t)$. Every node has a self-loop
which is not shown. For aesthetic reasons, instead of labeling the nodes as $1, \ldots, n$, we label them with $1, \ldots, n/2$ and $1', \ldots, (n/2)'$. }
\end{figure}

In this subsection, we show that it is impossible to relax the condition of unchanging degrees in
Theorem \ref{mainthm}. In particular, if we only impose the slightly
weaker condition that the sorted degree sequence (the non-increasing list of node degrees) does not change with time, the time to achieve $\epsilon$-consensus can grow exponentially with $n$.  This is an unpublished result of Cao, Spielman, and Morse \cite{cao}; we provide here a simple proof. We note that the graph sequence used in \jntn{the} proof (see Figure 1) is similar to the sequence used in \cite{AKL08} to prove an exponential lower bound on the cover time of time-varying graphs.

\begin{proposition} 
Let $n$ be even and let $t$  be an integer multiple of $n/2$.
Consider the graph sequence of length \blue{$t=kn/2$, consisting of periodic repetitions of \ao{the reversal\footnote{That is, we are considering the sequence $ G(n/2-1), \ldots, G(1), G(0), G(n/2-1), \ldots, G(1), G(0), G(n/2)-1, \ldots$.} of the length-$n/2$ sequence described in Figure 1}.}  For this graph sequence to result in $(1/4)$-consensus, we must have $t\geq 2^{(n/2)}/\ao{8}$. \label{degreechange}
\end{proposition}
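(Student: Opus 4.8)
The plan is to move to the random-walk picture via Proposition~\ref{forget} and then show that a lazy random walk on the sequence of Figure~1 cannot ``cross'' from one half of the vertex set to the other except on an exponentially long time scale. By Proposition~\ref{forget}, the sequence in the statement (the reversal of $k$ periods of the length-$n/2$ sequence of Figure~1) results in $(1/4)$-consensus if and only if the corresponding \emph{forward} sequence $A(0),A(1),\ldots,A(t-1)$ of random-walk transition matrices --- which is just $k$ forward repetitions of the length-$n/2$ block --- is $(1/4)$-forgetful, i.e.\ $\tfrac12\|\p_i-\p_j\|_1\le 1/4$ for all $i,j$, where $\p_i^T$ is the $i$th row of $P=A(0)\cdots A(t-1)$, equivalently the law at time $t$ of the walk started from $i$. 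So it suffices to produce two starting vertices whose time-$t$ laws are more than $1/4$ apart in total variation whenever $t<2^{n/2}/8$.

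I would take $a$ to be a vertex of the first half and $b=\phi(a)$ its image under the graph automorphism $\phi$ that interchanges the two halves (present in every $G(s)$, which is what the $1,\dots,n/2$ vs.\ $1',\dots,(n/2)'$ labelling in Figure~1 reflects). Let $q(t)$ be the probability that the walk from $a$ lies in the second half at time $t$; applying $\phi$ shows the walk from $b$ lies in the first half at time $t$ with the same probability $q(t)$. Splitting the $\ell_1$-distance according to which half a vertex lies in then gives
\[
\tfrac12\|\p_a-\p_b\|_1 \ \ge\ \bigl|\,\p_a(\text{half 2})-\p_b(\text{half 2})\,\bigr| \ =\ \bigl|\,q(t)-(1-q(t))\,\bigr| \ =\ 1-2q(t)
\]
as long as $q(t)<1/2$. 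Hence $(1/4)$-forgetfulness forces $q(t)\ge 3/8$, and the proposition follows once we show $q(t)\le t\,2^{-n/2}$.

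The crux is that bound on $q(t)$. Since the walk starts in the first half, being in the second half at time $t$ requires it to have traversed a cross-edge at some step $s<t$, so by a union bound $q(t)\le\sum_{s=0}^{t-1}\Pr[\text{the walk traverses a cross-edge at step }s]$, and it is enough to bound each term by $2^{-n/2}$. This is the only place the explicit structure of the Figure~1 sequence is used: at any step there is a single designated vertex of the first half incident to a cross-edge, and the circular shift carries this ``crossing vertex'' around the half at unit speed, which the walk --- lazy because every node has a self-loop --- cannot follow. I would make this precise by passing to the rotating reference frame, in which the within-half graph is fixed but each transition is composed with a unit shift: the crossing vertex is then pinned, the rotation contributes a deterministic drift pushing the walk away from it, and a gambler's-ruin / monovariant estimate on the walk's distance to the crossing vertex (using that the ``stay'' probability is at least $1/2$, so the distance can be decreased by at most the $\pm1$ increments of the lazy walk against a unit drift) shows that the walk's law puts mass at most $2^{-n/2}$ on the crossing vertex at \emph{every} step; intuitively, the walk is typically at distance $\Theta(n/2)$ from it and would need a run of $\Theta(n/2)$ consecutive shift-compensating moves to reach it. Traversing a cross-edge at step $s$ requires being at the crossing vertex at step $s$, so each term is at most $2^{-n/2}$, whence $q(t)\le t\,2^{-n/2}$. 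Combining with the previous paragraph, $(1/4)$-consensus forces $t\,2^{-n/2}\ge 3/8$, hence $t\ge 2^{n/2}/8$.

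The main obstacle, and the only step requiring real work, is the per-step estimate $\Pr[\text{cross at step }s]\le 2^{-n/2}$: one must spell out the graphs $G(s)$ precisely enough to see how the circular shift displaces the cross-edge relative to the bulk of each half, and then prove that the lazy walk's law never concentrates near the crossing vertex. This is exactly the mechanism behind the cover-time lower bound of \cite{AKL08}, where a mobile feature of a time-varying graph outruns a lazy walk. The remaining ingredients --- the reduction through Proposition~\ref{forget}, the symmetry argument, the total-variation lower bound $1-2q(t)$, and the union bound over time --- are routine.
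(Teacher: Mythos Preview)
Your reduction through Proposition~\ref{forget} and the symmetry argument are exactly what the paper does (the paper takes $a=n/2$ and $b=(n/2)'$, a specific pair you should also fix). The paper likewise reduces everything to showing that a walk started on the left needs exponentially long to cross.

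The difference is in how the crossing estimate is obtained. The paper does \emph{not} prove a uniform per-step bound on the mass at the crossing vertex. Instead it works with the first crossing time $T$ and bounds $E[T]$ from both sides. From $(1/4)$-forgetfulness it gets $\Pr[T\le t]\ge 1/4$, hence $E[T]\le 4t$ by viewing each length-$t$ block as an independent attempt. For the lower bound it makes one clean structural observation: in the left half, each non-center node has only the self-loop and the edge to the current center, and the center shifts by one each step; hence a walk at an ``emerged'' node (the node that was center one step ago) reaches the center again \emph{only} by taking the self-loop $n/2-1$ times in a row, and any failure resets it to a newly emerged node. Thus $T$ stochastically dominates the waiting time for $n/2-1$ consecutive heads in fair coin flips, giving $E[T]\ge 2^{(n/2)-1}$ immediately. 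Combining yields $t\ge 2^{n/2}/8$.

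Your per-step claim---that the walk's law puts mass at most $2^{-n/2}$ on the crossing vertex at \emph{every} step---is plausible (indeed the stationary mass there is $\Theta(2^{-n/2})$), but the ``gambler's-ruin/drift'' framing is not the right mechanism: from a non-center node the two options are ``self-loop'' (distance to center decreases by one) or ``jump to center'' (distance resets to $n/2-1$), which is a reset chain, not a nearest-neighbor walk with drift, and proving the uniform-in-$s$ bound takes more care than your sketch suggests. The same structural observation the paper uses actually gives you what you need more directly: $q(t)\le \Pr[T\le t]\le \Pr[\text{some run of }n/2-1\text{ self-loops occurs by time }t]\le t\cdot 2^{-(n/2-1)}$, and combining with your $q(t)\ge 3/8$ already yields $t\ge (3/16)\,2^{n/2}>2^{n/2}/8$. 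So your route works, but the paper's $E[T]$ argument (or this first-hitting-time version of it) is shorter than the per-step analysis you propose.
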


\begin{proof} \ao{Suppose that this graph sequence of length $t$ results in $(1/4)$-consensus.} Then Proposition \ref{forget} implies that \ao{the sequence $G'$ of length $t$ consisting of periodic repetitions\footnote{That is, $G'(t)$ is the sequence $G(0), G(1) \ldots, G(n/2-1), G(0), G(1), \ldots, G(n/2-1), G(0), G(1), \ldots$.} of the length $n/2$ sequence described in Figure 1 is} $(1/4)$-forgetful. Let $p_{ij}$ be the associated $t$-step transition probabilities.

Let $T$ be the time that it takes for a random walk that starts at state $n/2$ at time $0$ to cross  
into the right-hand side  of the graph, let $\delta$ be the probability that $T$ is less than or equal to $t$, and 
define $R$ to be the set of nodes on the right side of the graph, i.e., $R=\{1' \ldots (n/2)'\}$. 
Clearly, \[ \sum_{j'\in R} p_{(n/2),j'} \leq P(T \leq t) =  \delta,\] since a walk located in $R$ at time $t$ has obviously
transitioned to the right-hand side of the graph by $t$. Next, symmetry yields  
$\sum_{j'\in R} p_{(n/2)',j'}\geq 1-\delta$. 
Using the fact that the graph sequence is $(1/4)$-forgetful in the first inequality below, we have
\[
\frac{1}{2}  \geq \sum_{j'\in R} |p_{(n/2)',j'}-p_{((n/2),j'}| \geq \sum_{j'\in R} p_{(n/2)',j'} - \sum_{j'\in R} p_{(n/2),j'} \geq (1-\delta) -\delta = 1-2\delta, 
\]
 which yields that $\delta\geq 1/4$.  By viewing periods of length $t$ as a single attempt to get to the right half of the graph, with each attempt having probability at least $1/4$ to succeed, we obtain $E[T]\leq 4t$.

So far, we have not used the structure of the graphs beyond the fact that they can partitioned into a right-side and
a left-side. We now make the observation which may be viewed as the motivation behind choosing this particular graph sequence. Let us say that node
$i$ has {\em emerged} at time $t$ if node $i$ was the center of the left-star in $G'(t-1)$; for example, node $1$ has emerged at time $1$, node $2$ has 
emerged at time $2$, and so on. By symmetry, $T$ is the expected time until a random walk starting at an emerged node crosses to the right-hand side 
of the graph. Observe that, starting from an emerged node, the random walk will transition to the right-hand side of the graph if it takes
the self-loop $n/2-1$ consecutive times and then, once it is at the center, takes the link across; however, if it fails to take the self-loop during the first
$n/2-1$ times, it then transitions to a newly emerged node. This implies that the expected time to transition to the right hand side from an emerged node  is at least the expected time until the walk takes $n/2-1$ self-loops consecutively:   $2^{(n/2)-1} \leq E[T]$. 

Putting this together with the previous inequality $E[T] \leq 4t$, we immediately have the desired result.  
\end{proof}

%
%
%

\begin{small}

\end{small}

\end{document}